\newtheorem{theorem}{Theorem}[section]
\newtheorem{proposition}[theorem]{Proposition}
\newtheorem{remark}[theorem]{Remark}
\newtheorem{conjecture}{Conjecture}
\newcommand{\id}{\mathrm{id}}
\newcommand{\R}{\mathbb{R}}
\newcommand{\C}{\mathbb{C}}
\newcommand{\vol}{{\rm vol}}
\newcommand{\Aut}{\operatorname {Aut}}
\begin{document}

\title[On the Bergman metric of symmetric spaces]{On the Bergman metric of symmetric spaces}

\author{Andrea Loi}
\address{(Andrea Loi) Dipartimento di Matematica \\
         Universit\`a di Cagliari (Italy)}
         \email{loi@unica.it}

\author{Matteo Palmieri}
\address{(Matteo Palmieri) Dipartimento di Matematica \\
         Universit\`a di Cagliari (Italy)}
        \email{matteo.palmieri@unica.it}

\thanks{
The authors are supported by INdAM and  GNSAGA - Gruppo Nazionale per le Strutture Algebriche, Geometriche e le loro Applicazioni and by the project ProBiKi of Fondazione di Sardegna (Italy). The second author acknowledges the support and hospitality of Cergy University, where part of this work was conducted during a research stay.}

\subjclass[2000]{53C55, 32Q15, 53C24, 53C42 .} 
\keywords{Bergman kernel, Bergman metric, K{\"a}hler immersion, Symmetric space}

\begin{abstract}
We study bounded domains $\Omega\subset\mathbb{C}^n$ whose Bergman metric is locally symmetric, i.e. its Riemannian curvature tensor is parallel with respect to the Levi-Civita connection. Following the strategy developed in  \cite{UnifThm2}, we obtain two rigidity results. If the Bergman metric of $\Omega$ is complete, then $\Omega$ is (globally) symmetric. If instead $\Omega$ is pseudoconvex, then $\Omega$ is biholomorphic to $\widetilde\Omega\setminus E$, where $\widetilde\Omega\subset\mathbb{C}^n$ is a bounded symmetric domain and $E\subset\widetilde\Omega$ is relatively closed and pluripolar. The proofs combine the structure theory of Hermitian symmetric spaces with Calabi's theory of K\"ahler immersions into the infinite dimensional complex projective space (in particular, rigidity and the hereditary property of the diastasis), together with analytic and pluripotential tools based on extension properties of square-integrable holomorphic functions and the Bergman kernel.
\end{abstract}

\maketitle

\tableofcontents

\section{Introduction}
The Bergman kernel and the Bergman metric were introduced by S.~Bergman in 1922 (see \cite{Bergman1922}) and have since been intensively studied, as they provide rich analytic and geometric information on complex manifolds. In this paper, we focus on open and connected subsets of $\mathbb{C}^n$, $n \geq 1$, which we call domains. Recall that, for a bounded domain $\Omega \subset \mathbb{C}^n$, the associated Bergman space is the Hilbert space of holomorphic square-integrable functions
\begin{align*}
    A^2(\Omega) = \mathcal{O}(\Omega) \cap L^2(\Omega).
\end{align*}
If $\{ \phi_\alpha \}_{\alpha \in A}$ is a complete orthonormal system for $A^2(\Omega)$, the \emph{Bergman kernel} of $\Omega$ is given by the expansion
\begin{equation}\label{EQ-Kernelexpansion}
    K_\Omega(z, \xi) = \sum_{\alpha \in A}\ \phi_\alpha(z) \overline{\phi_\alpha(\xi)}.
\end{equation}
Remarkably, $K_\Omega(\cdot, \xi)$ satisfies the reproducing property
\begin{equation}\label{EQ-Reprodkernel}
    \varphi(\xi) = \langle \varphi, K_\Omega(\cdot, \xi) \rangle_{A^2(\Omega)}\qquad \forall\,\varphi\in A^2(\Omega).
\end{equation}
The Bergman kernel is closely related to the holomorphic and geometric structure of $\Omega$. It is well known that
\[
\Phi\in C^\infty(\Omega,\mathbb{R}),\quad \Phi(z):=\log K_\Omega(z,z),
\]
is strictly plurisubharmonic on $\Omega$. Hence
\begin{equation}\label{EQ-Bergmanform}
    \omega_\Omega = \sqrt{-1} \partial \bar\partial \Phi
\end{equation}
defines a K{\"a}hler form on $\Omega$, and the associated K{\"a}hler metric $g_\Omega$, which is a biholomorphic invariant, is called the \emph{Bergman metric} of $\Omega$. 

The basic example is the unit ball $\mathbb{B}^n$. If $\|\cdot\|$ denotes the Euclidean norm on $\mathbb{C}^n$, one computes
\begin{align*}
    K_{\mathbb{B}^n}(z, z) = \frac{n!}{\pi^n} \frac{1}{(1 - || z ||^2)^{n + 1}}\ ,\quad \omega_{\mathbb{B}^n} =- (n + 1) \sqrt{-1} \partial \bar\partial \log (1 - || z ||^2)
\end{align*}
In particular, $g_{\mathbb{B}^n}$ has constant holomorphic sectional curvature $-\frac{4}{n+1}$, as it differs only by the factor $n + 1$ from the hyperbolic metric on $\mathbb{B}^n$. The interested reader can refer to \cite{Krantz2013} for a detailed exposition on this topic. 

The studies accomplished so far highlight how $g_\Omega$ encodes the intrinsic geometric behavior of $\Omega$. One key example of such results has been given by Q.-K. Lu in 1966 (see \cite{UnifThm1}), where he establishes that if $g_\Omega$ is complete with constant holomorphic sectional curvature then $\Omega$ is biholomorphic to $\mathbb{B}^n$. In 2025, X. Huang, S.-Y. Li and J. N. Treuer (see \cite{UnifThm2}) proved that if $g_\Omega$ has constant holomorphic sectional curvature then it must be negative, and assuming pseudoconvexity instead of completeness, proved that $\Omega$ is biholomorphic to $\mathbb{B}^n$ with possibly a pluripolar set removed. Observe that this uniformization generalizes Lu's one, as in 1955 Bremermann proved (see \cite{Bremermann1955}) that if $g_\Omega$ is complete then $\Omega$ is pseudoconvex, while the converse is not true: the classical counterexample being the pseudoconvex domain obtained by removing a complex hyperplane from $\mathbb{B}^n$. Starting from \cite{UnifThm2}, in 2025 P. Ebenfelt, J. N. Treuer and M. Xiao (see \cite{UnifThm3}) maintained only the curvature hypothesis and obtained that $\Omega$ is biholomorphic to $\mathbb{B}^n$ with possibly a set of zero Lebesgue measure removed, over which $A^2$ functions extend holomorphically to $\mathbb{B}^n$.

The core matter of these results can be synthesized as follows: a local curvature condition on $g_\Omega$ determines, up to biholomorphisms, the global shape of $\Omega$. It is then fitting to ask whether the same concept holds for other kinds of curvature conditions and shapes. Within this perspective, it is well-known (see, e.g., \cite{KobayashiNomizu} Theorem 7.9) that the negative constant holomorphic curvature condition characterizes $\mathbb{B}^n$ in the class of simply-connected and complete K{\"a}hler manifolds, and remarkably (see \cite{Helgason} Ch. IV Theorem 5.6), the parallel Riemannian curvature condition characterizes Hermitian symmetric spaces in the same class. This suggests investigating whether the analogous local symmetry condition $\nabla R^\Omega=0$ for the Bergman metric forces $\Omega$ to be (essentially) a bounded symmetric domain. Alongside these uniformization-type results, recent works investigate finer Bergman-geometric features in settings closely related to bounded symmetric domains, such as metrics induced by the ball and the Bergman geometry of Cartan--Hartogs domains; see \cite{Palmieri2025BergmanBall, LoiMossaZuddas2025CartanHartogs} and references therein.

Inspired by such parallelism, our aim in this work is to generalize the above mentioned results to the setting of bounded symmetric domains in $\mathbb{C}^n$. Let $\text{Aut}(\Omega)$ denote the group of biholomorphisms of $\Omega$ onto itself. Recall that a domain $\Omega$ is called \emph{symmetric} if for every point $p\in\Omega$ there exists an automorphism
$f_p\in\text{Aut}(\Omega)$ such that $f_p^2=\id$ and $p$ is an isolated fixed point of $f_p$.
We say that $\Omega$ is \emph{locally symmetric} if for every point $p\in\Omega$ there exist a neighborhood $U$ of $p$
and an automorphism $f_p\in\text{Aut}(U)$ such that $f_p^2=\id$ and $p$ is an isolated fixed point of $f_p$.
Equivalently (see \cite{Helgason}, Ch.~IV, Theorem~2.1), if $\nabla$ and $R^\Omega$ denote the Levi--Civita connection
and the Riemannian curvature tensor of $g_\Omega$, then $\Omega$ is locally symmetric if and only if $\nabla R^\Omega=0$.
We refer to \cite{Helgason} for further background on symmetric spaces.

In analogy with \cite{UnifThm1} and \cite{UnifThm2}, the main results of the present paper are as follows:

\begin{theorem}\label{THM-completelocsymmetric}
Let $\Omega \subset \mathbb{C}^n$, $n \geq 1$, be a bounded domain. If $\nabla R^\Omega = 0$ and $g_\Omega$ is complete, then $\Omega$ is symmetric.
\end{theorem}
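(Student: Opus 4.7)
The plan is to identify $(\Omega,g_\Omega)$ with a bounded symmetric domain via its Riemannian universal cover, and then to use Calabi's rigidity for K\"ahler immersions into $\mathbb{CP}^\infty$ to rule out any nontrivial covering.

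\emph{Step 1 (universal cover).} Since $g_\Omega$ is complete K\"ahler with $\nabla R^\Omega=0$, the Riemannian universal cover $(\widetilde{\Omega},\tilde g)$ is a simply connected, complete K\"ahler manifold with parallel curvature, hence a Hermitian symmetric space by the standard classification in \cite{Helgason}. Because $\Omega$ is a bounded domain it is Brody hyperbolic, and Brody hyperbolicity lifts to the covering: a non-constant entire map into $\widetilde{\Omega}$ would descend through $\pi$ to a non-constant entire map into $\Omega$, contradicting boundedness. The de Rham decomposition $\widetilde{\Omega}=\mathbb{C}^{n_0}\times M_+\times M_-$ into Euclidean, compact-type and noncompact-type factors therefore collapses to $\widetilde{\Omega}=M_-$, since Euclidean and compact-type factors contain complex lines and rational curves respectively. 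By the Harish--Chandra realization, $\widetilde{\Omega}$ is biholomorphic to a bounded symmetric domain $D\subset\mathbb{C}^n$, yielding a holomorphic Riemannian covering $\pi\colon D\to\Omega$ with deck group $\Gamma=\pi_1(\Omega)$.

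\emph{Step 2 (main obstacle: trivial deck group).} The heart of the argument is to show $\Gamma=\{e\}$. The Kobayashi--Bergman embedding $\iota_\Omega\colon\Omega\to\mathbb{CP}^\infty$, $z\mapsto[\phi_\alpha(z)]_{\alpha}$, associated to an orthonormal basis $\{\phi_\alpha\}$ of $A^2(\Omega)$, is a full K\"ahler immersion satisfying $\iota_\Omega^*\omega_{FS}=\omega_\Omega$; consequently $\iota_\Omega\circ\pi\colon D\to\mathbb{CP}^\infty$ is a $\Gamma$-invariant full K\"ahler immersion. The bounded symmetric domain $D$ also carries a canonical full \emph{injective} K\"ahler embedding $\iota_D\colon D\to\mathbb{CP}^\infty$ coming from the Calabi--Wallach construction (via the Borel embedding of $D$ into its compact dual composed with the minimal projective embedding of the latter), which pulls back $\omega_{FS}$ to a prescribed invariant K\"ahler form on $D$. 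The pulled-back form $\pi^*\omega_\Omega$ is a $\Gamma$-invariant K\"ahler form of parallel curvature, hence coincides with a suitable rescaling of $\omega_D$ on each irreducible factor of $D$; the existence of the K\"ahler immersion $\iota_\Omega\circ\pi$ places this rescaling within the admissible Wallach range, and the rescaling can be absorbed by composing $\iota_D$ with a Veronese-type re-embedding. Calabi's rigidity theorem, together with the hereditary property of the diastasis, then forces the identity $\iota_\Omega\circ\pi=U\circ\iota_D$ for some unitary transformation $U$ of $\mathbb{CP}^\infty$. Combining the $\Gamma$-invariance of the left-hand side with the injectivity of $\iota_D$ yields $\iota_D(\gamma z)=\iota_D(z)$ for every $\gamma\in\Gamma$ and every $z\in D$, hence $\gamma=\id_D$.

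Once $\Gamma=\{e\}$, $\pi$ is a biholomorphism, so $\Omega$ is K\"ahler isomorphic to the bounded symmetric domain $D$ and inherits its geodesic symmetry at every point, proving that $\Omega$ is globally symmetric. The principal technical obstacle is entirely concentrated in Step~2: correctly identifying the scaling that relates $\pi^*\omega_\Omega$ to the form of an injective K\"ahler embedding of $D$ into $\mathbb{CP}^\infty$, and globalizing Calabi's local rigidity via the diastasis heredity. This is precisely where the structure theory of Hermitian symmetric spaces (Wallach set, Jordan-triple description) and Calabi's theory combine in an essential way.
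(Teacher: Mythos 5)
Your proposal is correct and follows essentially the same route as the paper: pass to the universal cover, identify it with a bounded symmetric domain, compose the Bergman--Bochner map with the covering and a Veronese rescaling, and use Calabi's rigidity together with the injectivity of a suitable projective embedding of the symmetric domain (guaranteed by the Wallach set and Proposition~\ref{PROP-injectiveKahlerimmersion}) to force the covering to be trivial. The only substantive difference is expository: you justify the noncompact type of the cover via Brody hyperbolicity, a detail the paper leaves implicit.
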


\begin{theorem}\label{THM-pseudoconvexlocsymmetric}
Let $\Omega \subset \mathbb{C}^n$, $n \geq 1$, be a bounded domain. If $\nabla R^\Omega = 0$ and $\Omega$ is pseudoconvex, then there exists $\widetilde\Omega \subset \mathbb{C}^n$ bounded symmetric domain, and  $E \subset \widetilde\Omega$ pluripolar and closed in $\widetilde\Omega$ such that $\Omega \cong \widetilde\Omega \setminus E$. 
\end{theorem}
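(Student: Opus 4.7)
The plan is to follow the strategy of \cite{UnifThm2}, with $\B^n$ replaced by a general bounded symmetric domain, by proceeding in three stages.

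\smallskip
\noindent\emph{Identifying the model and producing a local biholomorphism.} The first step is to exploit the hypothesis $\nabla R^\Omega=0$: by the Helgason characterization it ensures that $g_\Omega$ is locally K\"ahler-isometric to the Bergman metric of an $n$-dimensional simply-connected Hermitian symmetric space. Standard negativity properties of the Bergman metric on bounded domains are then used to rule out compact-type and flat factors in the de Rham decomposition of the model; by the Harish--Chandra realization the model can therefore be taken to be a bounded symmetric domain $\widetilde\Omega\subset\C^n$, equipped with its own Bergman metric $g_{\widetilde\Omega}$. Both $(\Omega,g_\Omega)$ and $(\widetilde\Omega,g_{\widetilde\Omega})$ carry canonical full K\"ahler immersions into the infinite-dimensional projective space $\Pro^\infty(\C)$, built from the orthonormal bases appearing in \eqref{EQ-Kernelexpansion}, which pull back the Fubini--Study form to $\omega_\Omega$ and $\omega_{\widetilde\Omega}$ respectively. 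Calabi's rigidity theorem then upgrades the abstract local isometry to a biholomorphism: around every $p\in\Omega$ there is a biholomorphism $f_p\colon U_p\to V_p\subset\widetilde\Omega$ with $f_p^*g_{\widetilde\Omega}=g_\Omega$. Equivalently, by the hereditary property of the Calabi diastasis, the germ of $D_\Omega$ at $(p,p)$ coincides with the $f_p$-pullback of the germ of $D_{\widetilde\Omega}$ at $(f_p(p),f_p(p))$.

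\smallskip
\noindent\emph{Globalizing the $f_p$'s and identifying $\Omega$ with $\widetilde\Omega\setminus E$.} The second step is to analytically continue the $f_p$'s to a single-valued global map $F\colon\Omega\to\widetilde\Omega$. The hereditary property of the diastasis together with the rigidity of Bergman isometries (any two germs of $f_p$'s differ by an element of $\Aut(\widetilde\Omega)$ fixing a common point with its differential, hence trivial) forces compatibility on overlaps; a monodromy analysis, possibly carried out on the universal cover of $\Omega$ and then descended, glues the $f_p$'s into one $F$ which is locally biholomorphic and satisfies $F^*g_{\widetilde\Omega}=g_\Omega$. The Bergman transformation law then forces $F$ to be globally injective, so that $F\colon\Omega\to F(\Omega)\subset\widetilde\Omega$ is a biholomorphism onto an open subset. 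The third step is to show that $E:=\widetilde\Omega\setminus F(\Omega)$ is relatively closed in $\widetilde\Omega$ and pluripolar. Here, mimicking \cite{UnifThm2}, pseudoconvexity of $\Omega$ combined with the identity between the Bergman kernels lets one extend every element of $A^2(F(\Omega))$ holomorphically across $E$, and a standard pluripotential criterion forces $E$ to be pluripolar.

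\smallskip
\noindent\emph{Main obstacle.} The hardest point is expected to be the last, extension step. In \cite{UnifThm2} the argument rests on the very explicit, highly symmetric form of the Bergman kernel of $\B^n$; here it must be adapted to the Koecher-norm expression valid on an arbitrary bounded symmetric domain and, in the reducible case, to the splitting of the diastasis and of the Bergman kernel along the irreducible factors of $\widetilde\Omega$. A secondary technical difficulty is the monodromy analysis required to globalize $F$, which will presumably combine Cartan-type uniqueness for Bergman isometries with the Stein property of bounded symmetric domains to ensure that the gluing succeeds on the (possibly non-simply-connected) domain $\Omega$.
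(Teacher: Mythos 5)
Your plan follows the same overall architecture as the paper's proof (local Helgason model, Calabi rigidity to globalize, injectivity via the Bergman--Bochner map, a pluripotential criterion for the exceptional set), but it has genuine gaps at precisely the two points where the symmetric-domain case departs from the ball case of \cite{UnifThm2}. The first is the scaling. Helgason's theorem produces a local biholomorphic isometry $f_p\colon (U_p,g_\Omega)\to (V_p,\lambda g_{\widetilde\Omega})$ for some a priori unknown $\lambda>0$; your assertion $f_p^*g_{\widetilde\Omega}=g_\Omega$ is unjustified at this stage (that $\lambda=1$ is only recovered \emph{a posteriori}, in Remark~\ref{RMK-Bergmankernelcomplement}, as a consequence of the theorem). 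This matters because, unlike for $\mathbb{B}^n$, an arbitrary positive multiple $\lambda g_{\widetilde\Omega}$ of the Bergman metric of a bounded symmetric domain is \emph{not} projectively induced: induced-ness is governed by the Wallach sets of the irreducible factors. So the ``canonical'' immersion of $(\widetilde\Omega,g_{\widetilde\Omega})$ is at the wrong scale to compare, via $f_p$, with that of $(\Omega,g_\Omega)$, and Calabi rigidity cannot be invoked as you state it. The paper repairs this by choosing an integer $k>\Gamma_{\widetilde\Omega}/\lambda$ so that $(\widetilde\Omega,k\lambda g_{\widetilde\Omega})$ admits a full injective K\"ahler immersion (Proposition~\ref{PROP-symmetricdomains}(A), via \cite{LoiZedda}) and composing the Bergman--Bochner map of $\Omega$ with the degree-$k$ Veronese map; this device is absent from your plan and is needed before any of the rigidity arguments can start.

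The second gap is the step you yourself flag as the main obstacle. It is not merely an ``adaptation of the explicit kernel formula'': the paper needs the specific structural fact that the real analytic expansion at $0$ of $z\mapsto K_{\widetilde\Omega}(z,z)$ contains no non-constant purely holomorphic or antiholomorphic terms (Proposition~\ref{PROP-symmetricdomains}(B), proven from the generic norm $N(z,\bar z)=1+\sum_\ell(-1)^\ell m_\ell(z,\bar z)$). This is what kills the pluriharmonic ambiguity in $\log K_W-\lambda\log K_{\widetilde\Omega}$ and yields the exact identity $K_W(z,v)=c\,K_W(z,0)K_W(0,v)\bigl(K_{\widetilde\Omega}(z,v)\bigr)^\lambda$; from this one extends $1/K_W(\cdot,0)$ holomorphically to $\widetilde\Omega$, deduces local boundedness of $K_W(z,z)$ near $\partial W\cap\widetilde\Omega$ off a hypersurface, and applies the Pflug--Zwonek criterion plus Josefson's theorem. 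Without this kernel identity the extension and pluripolarity claims do not get off the ground. A smaller point: injectivity of the glued map $F$ does not follow from ``the Bergman transformation law'' (which presupposes a biholomorphism); in the paper it follows from the injectivity of the Bergman--Bochner map, proved via the reproducing property and the fact that polynomials lie in $A^2(\Omega)$ (Proposition~\ref{PROP-injectiveKahlerimmersion}).
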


We follow the general strategy developed in \cite{UnifThm2}. 
The main tools are Calabi's theory of K\"ahler immersions into $\mathbb{CP}^\infty$, the structure theory of Hermitian symmetric spaces, and pluripotential methods based on analytic properties of the Bergman kernel.

\medskip
\noindent\textbf{Organization of the paper.}
Section~2 is devoted to the proofs of Theorems~\ref{THM-completelocsymmetric} and~\ref{THM-pseudoconvexlocsymmetric}.
In the final part we include several remarks and we formulate a conjecture, inspired by the uniformization results in \cite{UnifThm3}, about removing the pseudoconvexity assumption.

\section{Proofs of the main results}\label{SUB-Preliminaries}

In the proofs of Theorem \ref{THM-completelocsymmetric} and \ref{THM-pseudoconvexlocsymmetric}, we need the following technical results about the Bergman metric of bounded domains.

We start with a general fact, valid for any bounded domain $\Omega\subset\C^n$, concerning the injectivity of K\"ahler immersions of positive scalings of the Bergman metric into $\mathbb{CP}^\infty$.

\begin{proposition}\label{PROP-injectiveKahlerimmersion}
Let $\Omega \subset \mathbb{C}^n$, $n \geq 1$, be a bounded domain. If for $c \in \mathbb{R}^{> 0}$, $(\Omega, c g_\Omega)$ admits a K{\"a}hler immersion into $\mathbb{CP}^\infty$, then such immersion is injective.
\end{proposition}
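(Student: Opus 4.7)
The plan is to argue by contradiction using Calabi's hereditary property of the diastasis together with the reproducing property of the Bergman kernel. Suppose $f\colon (\Omega, cg_\Omega)\to\mathbb{CP}^\infty$ is a K\"ahler immersion for which $f(p)=f(q)$ with $p\neq q$ in $\Omega$. Since the Fubini--Study diastasis vanishes at a single point, $D_{g_{FS}}(f(p),f(q))=0$. The hereditary property applied to $f$ then gives $D_{cg_\Omega}(p,q)=0$, and because the diastasis scales linearly in the K\"ahler potential and $c>0$, one concludes $D_{g_\Omega}(p,q)=0$.

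Next I would invoke the explicit formula for the Bergman diastasis,
\[
D_{g_\Omega}(p,q)=\log\frac{K_\Omega(p,p)\,K_\Omega(q,q)}{|K_\Omega(p,q)|^2},
\]
valid whenever $K_\Omega(p,q)\neq 0$; finiteness of $D_{g_{FS}}(f(p),f(q))$ transfers via the hereditary property to finiteness of $D_{g_\Omega}(p,q)$, so indeed $K_\Omega(p,q)\neq 0$. Hence $|K_\Omega(p,q)|^2=K_\Omega(p,p)\,K_\Omega(q,q)$, which by the reproducing property \eqref{EQ-Reprodkernel} is precisely the equality case of the Cauchy--Schwarz inequality applied to the pair $K_\Omega(\cdot,p),K_\Omega(\cdot,q)\in A^2(\Omega)$. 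It follows that $K_\Omega(\cdot,p)=\mu K_\Omega(\cdot,q)$ for some $\mu\in\bC^*$, and \eqref{EQ-Reprodkernel} then yields $\varphi(p)=\bar\mu\,\varphi(q)$ for every $\varphi\in A^2(\Omega)$.

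To conclude, since $\Omega$ is bounded all holomorphic polynomials lie in $A^2(\Omega)$: taking $\varphi\equiv 1$ forces $\bar\mu=1$, and then $\varphi(z)=z_i$ yields $p_i=q_i$ for each coordinate $i$, contradicting $p\neq q$. The main delicate point I anticipate is justifying the closed-form expression for $D_{g_\Omega}(p,q)$ at this specific pair $(p,q)$, since the diastasis is a priori defined only via a local analytic continuation of the K\"ahler potential; the vanishing (hence finiteness) of the ambient diastasis on $\mathbb{CP}^\infty$ however closes this gap and guarantees $K_\Omega(p,q)\neq 0$, legitimizing the use of the explicit Bergman formula.
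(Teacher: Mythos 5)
Your proof is correct and follows essentially the same route as the paper: pull back the vanishing Fubini--Study diastasis via the hereditary property to get $D_{g_\Omega}(p,q)=0$, identify this with the equality case of Cauchy--Schwarz for $K_\Omega(\cdot,p),K_\Omega(\cdot,q)\in A^2(\Omega)$, and contradict via the reproducing property and polynomials. The only (cosmetic) difference is that you work directly with the explicit kernel formula for the Bergman diastasis, whereas the paper phrases the same computation as injectivity of the Bergman--Bochner map; the delicate point you flag about $K_\Omega(p,q)\neq 0$ is present, in equivalent form, in the paper's argument as well.
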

\begin{proof}
Let $\mathcal{S} = \{\phi_j\}_{j\in\mathbb{N}^*}$ be a complete orthonormal system of $A^2(\Omega)$ and consider the associated Bergman--Bochner map
\begin{equation}\label{EQ-BergmanBochnermap}
\mathcal{B}^{\mathcal{S}} \colon(\Omega,g_\Omega) \longrightarrow \mathbb{CP}^\infty,\qquad z\longmapsto [\phi_1(z): \phi_2(z): \ldots ].
\end{equation}
It is well-known (see, e.g., \cite[Sec.~3.4]{LoiZeddabook}) that $\mathcal{B}^{\mathcal{S}}$ is a full K{\"a}hler immersion.

\smallskip
\noindent\emph{Step 1: $\mathcal{B}^{\mathcal{S}}$ is injective.}
Assume by contradiction $\mathcal{B}^{\mathcal{S}}(p) = \mathcal{B}^{\mathcal{S}}(q)$ for some $p, q \in \Omega$, $p \neq q$, so there exists $\lambda\in\mathbb{C}^*$ such that $\phi_j(p) = \lambda\, \phi_j(q)$ for all $j\in\mathbb{N}^*$. Let $K_\Omega$ be the Bergman kernel of $\Omega$. Using the expansion \eqref{EQ-Kernelexpansion}
\[
K_\Omega(\cdot,p) = \sum_{j\ge 1}\phi_j(\cdot)\,\overline{\phi_j(p)}
\]
we obtain
\[
K_\Omega(\cdot,p) = \overline{\lambda}\,K_\Omega(\cdot,q).
\]
By the reproducing property \eqref{EQ-Reprodkernel}, for every $\varphi \in A^2(\Omega)$,
\[
\varphi(p) = \langle \varphi, K_\Omega(\cdot,p) \rangle_{A^2(\Omega)}
= \lambda\, \langle \varphi, K_\Omega(\cdot,q) \rangle_{A^2(\Omega)}
= \lambda\,\varphi(q),
\]
hence
\begin{equation}\label{EQ-mapsatdifferentpoints}
\qquad \varphi(p) = \lambda\,\varphi(q),\quad \forall\, \varphi \in A^2(\Omega).
\end{equation}
Since $\Omega$ is bounded, every polynomial belongs to $A^2(\Omega)$; in particular, if $p = (p_1, \dots, p_n)$ and $q = (q_1, \dots, q_n)$, there exists $k \in \{1, \dots, n\}$ with $p_k \neq q_k$, so the holomorphic function $\varphi(z) = z_k - p_k$ lies in $A^2(\Omega)$ and satisfies $\varphi(p)=0$ while $\varphi(q) = q_k - p_k \neq 0$, contradicting \eqref{EQ-mapsatdifferentpoints}. Therefore $\mathcal{B}^{\mathcal{S}}$ is injective.

\smallskip
\noindent\emph{Step 2: any K{\"a}hler immersion $(\Omega,cg_\Omega) \to \mathbb{CP}^\infty$ is injective.}
Let $F \colon (\Omega,cg_\Omega) \to \mathbb{CP}^\infty$ be a K{\"a}hler immersion. Recall that the diastasis of the Fubini--Study metric $g_{FS}$ can be written in homogeneous coordinates $[Z], [W] \in \mathbb{CP}^\infty$ as
\[
D_{FS}([Z],[W]) = \log\frac{\langle Z, Z\rangle\, \langle W, W\rangle}{|\langle Z, W\rangle|^2}\ge 0,
\]
and $D_{FS}([Z],[W]) = 0$ if and only if $[Z] = [W]$ (by equality in Cauchy--Schwarz). Moreover, by Calabi's hereditary property of the diastasis (see, e.g., \cite{Calabi53}), the diastasis of $c \Omega := (\Omega,cg_\Omega)$ is given by the pullback of $D_{FS}$ via $F$, i.e.
\[
D^{c\Omega}(x, y) = D_{FS} \bigl(F(x),F(y) \bigr),
\]
whenever the diastasis is defined. Assume $F(p) = F(q)$. Then 
$$D_{FS}(F(p),F(q))=D^{c\Omega}(p,q) = 0.$$ 
Since the diastasis scales linearly with the metric, i.e. $D^{c\Omega} = c\,D^\Omega$, we also have 
$D^\Omega(p, q) = 0$. Again by Calabi's hereditary property of the diastasis (see, e.g., \cite{Calabi53}), if $\mathcal{B}^\mathcal{S}$ is a Bergman--Bochner map as in \eqref{EQ-BergmanBochnermap}, then
\begin{align*}
    D_{FS} \bigl(\mathcal{B}^\mathcal{S}(p), \mathcal{B}^\mathcal{S}(q) \bigr) = 0,
\end{align*}
which forces $\mathcal{B}^\mathcal{S}(p) = \mathcal{B}^\mathcal{S}(q)$, and $p = q$. Hence $F$ is injective.
\end{proof}

In the next proposition we collect three properties that will be repeatedly used in the sequel. The striking point in (A) is that, contrary to the ball $\mathbb{B}^n$ for which every positive multiple of the Bergman metric is projectively induced, for a general bounded symmetric domain projective inducedness occurs only for scalings prescribed by the Wallach set. On the other hand, (B) and (C) reflect features that are shared with $\mathbb{B}^n$.

\begin{proposition}\label{PROP-symmetricdomains}
Let $\Omega \subset \mathbb{C}^n$, $n \ge 1$, be a bounded symmetric domain. Then:
\begin{itemize}
\item [(A)] there exists a constant $\Gamma_\Omega \in \mathbb{R}_{\ge 0}$ such that, for every $\delta>\Gamma_\Omega$,
 the scaled Bergman manifold $(\Omega,\delta g_\Omega)$ admits a full and injective K\"ahler immersion into $\mathbb{CP}^\infty$;
\item [(B)] the real analytic expansion at $0$ of the diagonal Bergman kernel $z \mapsto K_\Omega(z, z)$ contains no non-constant purely holomorphic terms and no non-constant purely antiholomorphic terms.
\item [(C)] For any $z \in \Omega$ there is a constant $C_z \in \mathbb{R}^{>0}$ dependent on $z$ such that $| K_\Omega(z, w) | \leq C_z,\, \forall\, w \in \Omega$.

\end{itemize}
\end{proposition}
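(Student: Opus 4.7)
The plan for (A) is to invoke the theory of weighted Bergman spaces on bounded symmetric domains via the Wallach set. First I would pass to the Harish--Chandra realization of $\Omega$, in which the Bergman kernel factors as $K_\Omega(z,w)=c_\Omega\,h(z,w)^{-\gamma}$, with $h$ the generic norm (Jordan-triple determinant) and $\gamma$ the genus of $\Omega$; consequently $\omega_\Omega=\gamma\,\omega^0$ for $\omega^0:=\sqrt{-1}\,\partial\bar\partial\log h(z,z)^{-1}$. Next I would invoke the Faraut--Kor\'anyi theorem on the Wallach set: there exists $\nu_0\ge 0$ such that, for every $\nu>\nu_0$, $h(\cdot,\cdot)^{-\nu}$ is the reproducing kernel of an infinite-dimensional weighted Bergman space $\mathcal{H}_\nu\subset\mathcal{O}(\Omega)$. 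Choosing any orthonormal basis $\{\psi_j\}_{j\in\N^*}$ of $\mathcal{H}_\nu$ and mimicking the Bergman--Bochner construction \eqref{EQ-BergmanBochnermap} produces a full K\"ahler immersion $(\Omega,\nu\omega^0)\to\mathbb{CP}^\infty$, fullness being automatic from the linear independence of the coordinate functions. Setting $\delta:=\nu/\gamma$ and $\Gamma_\Omega:=\nu_0/\gamma$, this yields a full K\"ahler immersion $(\Omega,\delta g_\Omega)\to\mathbb{CP}^\infty$ for every $\delta>\Gamma_\Omega$, and injectivity comes for free from Proposition~\ref{PROP-injectiveKahlerimmersion}.

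For (B), my plan is to exploit the fact that, in the Harish--Chandra realization, $\Omega$ is a \emph{circular} domain: the linear circle action $\rho_\theta\colon z\mapsto e^{i\theta}z$ lies in $\Aut(\Omega)$ for every $\theta\in\R$. Applying the biholomorphic transformation law of the Bergman kernel to $\rho_\theta$ and observing that $|\det d\rho_\theta|^2=|e^{i n\theta}|^2=1$, I would deduce
\begin{equation*}
    K_\Omega(e^{i\theta}z,e^{i\theta}z)=K_\Omega(z,z),\qquad \theta\in\R.
\end{equation*}
Writing the real analytic Taylor expansion $K_\Omega(z,z)=\sum_{\alpha,\beta\in\N^n}c_{\alpha,\beta}\,z^\alpha\bar z^\beta$ at the origin and matching Fourier modes in $\theta$, this $S^1$-invariance forces $c_{\alpha,\beta}=0$ whenever $|\alpha|\neq|\beta|$. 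Since purely holomorphic monomials correspond to $\beta=0$, they survive only when $|\alpha|=0$, i.e.\ as the constant term; the purely antiholomorphic case is symmetric. This gives (B).

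The hard part will be the harmonic-analytic input in (A): one needs the precise description of the continuous part of the Wallach set and the Faraut--Kor\'anyi expansion of the weighted Bergman-kernel powers $h(\cdot,\cdot)^{-\nu}$. Once those are imported from the literature, both fullness (built into the construction) and injectivity (via Proposition~\ref{PROP-injectiveKahlerimmersion}) are formal consequences. Part (B), by contrast, is essentially soft, hinging only on the circularity of the Harish--Chandra realization and the transformation law for $K_\Omega$.
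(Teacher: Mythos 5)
Your argument for (A) follows the same route as the paper (Wallach set, weighted Bergman spaces with kernel a power of the generic norm, injectivity delegated to Proposition~\ref{PROP-injectiveKahlerimmersion}), but as written it only covers the \emph{irreducible} case: the identity $K_\Omega(z,w)=c_\Omega\,h(z,w)^{-\gamma}$ with a single genus $\gamma$, and the Faraut--Kor\'anyi description of the Wallach set, are statements about irreducible bounded symmetric domains. For a reducible domain the kernel is a product $\prod_k h_k(z^{(k)},w^{(k)})^{-\gamma_k}$ with possibly distinct genera, so there is no single exponent $\nu=\delta\gamma$ to place in ``the'' Wallach set. The paper handles this by decomposing $\Omega\simeq\Omega_1\times\cdots\times\Omega_s$, taking $\Gamma_\Omega=\max_k \inf W_c(\Omega_k)/\gamma_k$, immersing each factor separately, and composing the product immersion with the infinite-dimensional Segre embedding. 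You should add this reduction; once you do, your argument coincides with the paper's.

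For (B) you take a genuinely different and arguably cleaner route. The paper works with the explicit expansion $N_{\Omega_k}(z,\bar z)=1+\sum_{\ell}(-1)^\ell m_\ell(z,\bar z)$ of the generic norm into bidegree-$(\ell,\ell)$ pieces and checks combinatorially that all powers of the mixed part remain mixed, then multiplies over the factors. Your argument instead uses only that the Harish--Chandra realization is circled, so $z\mapsto e^{i\theta}z$ is an automorphism with unimodular Jacobian, whence $K_\Omega(e^{i\theta}z,e^{i\theta}z)=K_\Omega(z,z)$ and the Fourier-mode matching kills every coefficient $c_{\alpha,\beta}$ with $|\alpha|\neq|\beta|$ --- in particular all non-constant purely holomorphic or antiholomorphic terms. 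This is correct (and consistent with the paper's implicit convention that the bounded symmetric domain is taken in its circled realization, which the paper's own proof also assumes when invoking $K_{\Omega_k}=\mathrm{vol}(\Omega_k)^{-1}N_{\Omega_k}^{-\gamma_k}$); it is softer, needs no structure theory of the generic norm, proves the stronger statement that only bidegree-balanced terms survive, and applies verbatim to any bounded circled domain. Also note that the factor-by-factor step and the product formula for the kernel are not needed in your version, since the product of circled domains is circled.
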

\begin{proof}
Let $\Omega\simeq \Omega_1 \times \cdots \times \Omega_s$ be the decomposition of the bounded symmetric domain into irreducible factors. For every $k \in \{ 1, \dots, s \}$, denote by
\begin{itemize}
    \item $W_c(\Omega_k)$ the continuous part of the Wallach set of $\Omega_k$;
    \item $\gamma_k$ the genus of $\Omega_k$;
    \item $r_k$ the rank of $\Omega_k$.
\end{itemize}

\smallskip
\noindent\emph{(A)}
Set
\begin{equation}\label{EQ-GammaOmega}
\Gamma_\Omega := \max_{1 \le k \le s}\left\{\frac{\inf W_c(\Omega_k)}{\gamma_k}\right\} \in \mathbb{R}_{\ge 0}.
\end{equation}
If $\delta > \Gamma_\Omega$, then $\delta > \inf W_c(\Omega_k)/ \gamma_k$ for every $k$, hence $\delta\,\gamma_k\in W_c(\Omega_k)$ for all $k$. By \cite[Theorem~2]{LoiZedda} it follows that, for every
$k = 1, \dots, s$, the scaled Bergman manifold $(\Omega_k,\delta g_{\Omega_k})$ admits a \emph{full} K\"ahler immersion
\[
F_k \colon (\Omega_k,\delta g_{\Omega_k}) \longrightarrow \mathbb{CP}^\infty.
\]
Consider the product immersion $F_1 \times \cdots \times F_s$ and compose it with the (infinite-dimensional) Segre embedding to obtain a full K\"ahler immersion
\[
F \colon (\Omega,\delta g_\Omega) \cong (\Omega_1, \delta g_{\Omega_1}) \times \cdots \times(\Omega_s, \delta g_{\Omega_s})
\longrightarrow \mathbb{CP}^\infty.
\]
Since $\Omega$ is bounded, Proposition~\ref{PROP-injectiveKahlerimmersion} implies that \emph{any} such K\"ahler immersion is injective. This proves (A).

\smallskip
\noindent\emph{(B)}
Fix $k \in \{1,\dots,s\}$ and let $N_{\Omega_k}$ be the generic norm of $\Omega_k$. It is well-known (see, e.g., \cite[Sec.~1.3]{RoosYin}) that on the diagonal one has
\begin{equation}\label{EQ-Kernelandnorm-better}
K_{\Omega_k}(z, z) = \frac{1}{\mathrm{vol}(\Omega_k)}\bigl(N_{\Omega_k}(z,\bar z)\bigr)^{-\gamma_k},
\end{equation}
and there are $m_1, \dots, m_{r_k}$ polynomials of of bidegrees $(1, 1),\dots, (r_k, r_k)$ on $\Omega_k \times \text{conj}(\Omega_k)$ respectively, such that
\begin{equation}\label{EQ-genericnorm-better}
N_{\Omega_k}(z,\bar z) = 1+\sum_{\ell = 1}^{r_k} (-1)^\ell\, m_\ell(z,\bar z).
\end{equation}
In particular, each $m_\ell(z,\bar z)$ is a sum of monomials involving \emph{both} $z$ and $\bar z$, so the real analytic germ
\[
H_k(z,\bar z) := \sum_{\ell = 1}^{r_k} (-1)^\ell\, m_\ell(z,\bar z)
\]
has no non-constant purely holomorphic terms (depending only on $z$) and no non-constant purely antiholomorphic terms (depending only on $\bar z$). Now observe that if $H(z,\bar z)$ has this property, then so does any power $H(z,\bar z)^q$ ($q\ge1$), since products of mixed monomials are still mixed. Therefore, for any real $\alpha$, the binomial expansion
\[
(1 + H(z,\bar z))^{\alpha} = \sum_{q\ge0}\binom{\alpha}{q}\,H(z,\bar z)^q
\]
cannot contain non-constant purely holomorphic or antiholomorphic terms.
Applying this with $H = H_k,\, \alpha = -\gamma_k$, and using \eqref{EQ-Kernelandnorm-better}--\eqref{EQ-genericnorm-better} we conclude that the real analytic expansion of $z\mapsto K_{\Omega_k}(z, z)$ at $0$
contains no non-constant purely holomorphic nor purely antiholomorphic terms. Finally, recall that the Bergman kernel factorizes on products, i.e. writing $z = (z^{(1)},\dots,z^{(s)}) \in \Omega_1 \times \cdots \times \Omega_s$
\begin{equation}\label{EQ-Bergmanfactors}
    K_{\Omega}(z, w) = \prod_{k = 1}^s K_{\Omega_k} \bigl( z^{(k)}, w^{(k)} \bigr).
\end{equation}
Hence the same conclusion holds for $K_{\Omega}(z, z)$. This proves (B).

\smallskip
\noindent\emph{(C)}
Assume $\Omega$ is irreducible. Fix $z \in \Omega$. Since $\Omega$ is symmetric, $\Aut(\Omega)$ acts transitively; hence there exists
$\varphi^z \in \Aut(\Omega)$ such that $\varphi^z(z) = 0$. Let $J_{\varphi^z}(\xi) := \det_{\C}(d(\varphi^z)_\xi)$ be the complex Jacobian determinant of $\varphi^z$. In its Harish--Chandra realization, $\Omega$ is a bounded (positive) circular domain with $0 \in \Omega$, hence by \cite[Theorem~1]{Kaup1970}
every $\varphi \in \Aut(\Omega)$ extends holomorphically to $\overline{\Omega}$; in particular, $\varphi^z$ (and thus $J_{\varphi^z}$) extends holomorphically to $\overline{\Omega}$. Therefore, being $\overline{\Omega}$ compact and $\varphi^z$ biholomorphic
\begin{align*}
    M_z := \sup_{\xi \in \Omega}|J_{\varphi^z}(\xi)| \in \mathbb{R}^{> 0}
\end{align*}
We now use the transformation rule for the Bergman kernel under biholomorphisms:
for every $\psi\in\Aut(\Omega)$ and all $u,v\in\Omega$,
\begin{equation}\label{EQ-KernelTransform}
K_\Omega(u,v) = J_\psi(u)\,\overline{J_\psi(v)}\,K_\Omega(\psi(u),\psi(v)).
\end{equation}
Applying \eqref{EQ-KernelTransform} with $\psi = \varphi^z$ and $u = z$, we obtain
\begin{equation}\label{EQ-ReduceToZero}
K_\Omega(z, w) = J_{\varphi^z}(z)\,\overline{J_{\varphi^z}(w)}\, K_\Omega(0,\varphi^z(w)), \qquad w \in \Omega.
\end{equation}
Next we claim that $K_\Omega(0,\cdot)$ is constant. Indeed, since in its Harish--Chandra realization $\Omega$ is circled, for every $\theta \in \R$ the rotation $r_\theta(w) = e^{\sqrt{-1}\theta}w$ belongs to $\Aut(\Omega)$ and satisfies $J_{r_\theta}\equiv e^{\sqrt{-1}n\theta}$.
Using \eqref{EQ-KernelTransform} with $\psi = r_\theta$ and $u = 0$ gives
\[
K_\Omega(0,w) = K_\Omega(0, e^{\sqrt{-1}\theta}w),\qquad \forall\, \theta \in \R,\ \forall\,w\in\Omega.
\]
Since $w \mapsto K_\Omega(0,w)$ is antiholomorphic, its power series at $0$ has the form $K_\Omega(0, w) = \sum_\alpha c_\alpha\, \overline{w^\alpha}$; the above invariance forces $c_\alpha = 0$ for all $|\alpha| \ge 1$. Thus, $K_\Omega(0, w)\equiv K_\Omega(0, 0) = \frac{1}{\vol(\Omega)}$. Plugging this into \eqref{EQ-ReduceToZero} yields
\[
|K_\Omega(z, w)| = \frac{|J_{\varphi^z}(z)|}{\vol(\Omega)}\, |J_{\varphi^z}(w)| \leq \frac{|J_{\varphi^z}(z)|}{\vol(\Omega)}\, M_z, 
\qquad\forall\, w \in \Omega.
\]
Setting $C_z: = \dfrac{|J_{\varphi^z}(z)|}{\vol(\Omega)}\,M_z \in \mathbb{R}^{> 0}$ proves (C) in the irreducible case, hence in general by \eqref{EQ-Bergmanfactors}.
\end{proof}

\subsection{Proof of Theorem \ref{THM-completelocsymmetric}}\label{SUB-Proof1}

\begin{proof}
Let $\pi\colon (X,\pi^{*}g_\Omega)\to (\Omega,g_\Omega)$ be the universal covering. Since $g_\Omega$ is complete and $\nabla R^\Omega = 0$, $(X,\pi^{*}g_\Omega)$ is simply connected, complete and locally symmetric, hence a (globally) Hermitian symmetric space of noncompact type. Therefore there exist a bounded symmetric domain $\widetilde\Omega \subset \mathbb{C}^n$ and a constant
$\lambda \in \mathbb{R}^{> 0}$ together with a biholomorphic isometry
\[
\Psi\colon (X,\pi^{*}g_\Omega)\longrightarrow (\widetilde\Omega,\lambda g_{\widetilde\Omega})
\]
(see \cite[Ch.~IV, Theorem~5.6 and Ch.~VIII, Theorem~7.1]{Helgason}).
Choose an integer $k > \Gamma_{\widetilde\Omega}/\lambda$, where $\Gamma_{\widetilde\Omega}$ is given by \eqref{EQ-GammaOmega}. By Proposition~\ref{PROP-symmetricdomains} (A), the scaled Bergman manifold $(\widetilde\Omega,k\lambda g_{\widetilde\Omega})$ admits a full and injective K\"ahler immersion
\[
\mathcal{B}^{\widetilde\Omega}\colon (\widetilde\Omega,k\lambda g_{\widetilde\Omega})\longrightarrow \mathbb{CP}^\infty .
\]
On the other hand, fix a complete orthonormal system $\mathcal{S}$ of $A^2(\Omega)$ and consider the Bergman--Bochner map $\mathcal{B}^{\mathcal{S}}\colon (\Omega,g_\Omega)\to\mathbb{CP}^\infty$ given by \eqref{EQ-BergmanBochnermap}. Composing with the (generalized) Veronese map $\mathcal{V}_k\colon\mathbb{CP}^\infty\to\mathbb{CP}^\infty$ (so that $\mathcal{V}_k^{*}\omega_{FS} = k\omega_{FS}$), we obtain a full K\"ahler immersion
\[
\mathcal{B}^{\Omega}:=\mathcal{V}_k\circ \mathcal{B}^{\mathcal{S}}\colon (\Omega,kg_\Omega)\longrightarrow \mathbb{CP}^\infty .
\]
Pulling back via the covering and using the identification given by $\Psi$, we get a full K\"ahler immersion
\[
\widehat{\mathcal{B}} := \mathcal{B}^{\Omega} \circ \pi \circ \Psi^{-1} \colon (\widetilde\Omega, k\lambda g_{\widetilde\Omega})
\longrightarrow \mathbb{CP}^\infty.
\]
By Calabi's rigidity theorem \cite[Theorem~9]{Calabi53}, there exists a rigid motion $T$ of $\mathbb{CP}^\infty$ such that $\widehat{\mathcal{B}} = T\circ \mathcal{B}^{\widetilde\Omega}$.
Since $\mathcal{B}^{\widetilde\Omega}$ is injective, also $\widehat{\mathcal{B}}$ is injective; hence
$\pi$ must be injective. Therefore the universal covering is trivial and $\Omega$ is biholomorphic to $\widetilde\Omega$. In particular, $\Omega$ is symmetric.
\end{proof}

\subsection{Proof of Theorem \ref{THM-pseudoconvexlocsymmetric}}\label{SUB-Proof2}

Fix $p \in \Omega$. By \cite[Ch.~IV, Theorem~5.1 and Ch.~VIII, Theorem~7.1]{Helgason} there exist  $\widetilde\Omega \subset \mathbb{C}^n$ bounded symmetric domain, $U \subset \Omega$ neighborhood of $p$, $V \subset \widetilde\Omega$ open, $\lambda \in \mathbb{R}^{> 0}$ and a biholomorphic isometry
\begin{align*}
    f \colon (U, g_\Omega |_U) \to (V, \lambda g_{\widetilde\Omega} |_V)
\end{align*}
If $\iota \colon V \to \widetilde\Omega$ is the inclusion, then $\iota \circ f \colon (U, g_\Omega |_U) \to (\widetilde\Omega, \lambda g_{\widetilde\Omega})$ is a full K{\"a}hler immersion. Hence, if we pick $k \in \mathbb{N}^*,\, k > \frac{\Gamma_{\widetilde\Omega}}{\lambda}$, where $\Gamma_{\widetilde\Omega}$ is given by \eqref{EQ-GammaOmega}, by Proposition \ref{PROP-symmetricdomains} (A) we have a full and injective K{\"a}hler immersion
\begin{align*}
    \mathcal{B}^{\widetilde\Omega} \colon (\widetilde\Omega, k\lambda g_{\widetilde\Omega}) \to \mathbb{CP}^\infty
\end{align*}
Then $\mathcal{B}^{\widetilde\Omega} \circ \iota \circ f \colon (U, kg_\Omega |_U) \to \mathbb{CP}^\infty$ is still a full K{\"a}hler immersion. Composing the (generalized) Veronese map $\mathcal{V}_k \colon \mathbb{CP}^\infty \to \mathbb{CP}^\infty$, which satisfies $\mathcal{V}_k^*(\omega_{FS}) = k \omega_{FS}$, with a Bergman-Bochner map $\mathcal{B}^\mathcal{S} \colon (\Omega, g_{\Omega}) \to \mathbb{CP}^\infty$ given by \eqref{EQ-BergmanBochnermap}, we have a full K{\"a}hler immersion
\begin{align*}
    \mathcal{B}^\Omega := \mathcal{V}_k \circ \mathcal{B}^\mathcal{S} \colon (\Omega, k g_\Omega) \to \mathbb{CP}^\infty
\end{align*}
so by Calabi's rigidity theorem \cite[Theorem~9]{Calabi53} there exists a rigid motion $T$ of $\mathbb{CP}^\infty$ such that
\begin{align*}
    \mathcal{B}^{\widetilde\Omega} \circ \iota \circ f = T \circ \mathcal{B}^\Omega |_U
\end{align*}
Let $q \in \Omega$ and  $\eta_q \colon [0, 1] \to \Omega$ be a simple continuous path with $\eta_q(0) = p,\ \eta_q(1) = q$. Repeatedly applying Calabi's rigidity theorem, we find $A_q$ connected neighborhood of $\eta_q([0, 1]) \cup U$ and a K{\"a}hler immersion
\begin{align*}
    \widetilde f_q \colon (A_q, kg_\Omega |_{A_q}) \to (\widetilde\Omega, k\lambda g_{\widetilde\Omega}) 
\end{align*}
such that $\widetilde f_q |_U = \iota \circ f$. Then $T \circ \mathcal{B}^\Omega |_U = \mathcal{B}^{\widetilde\Omega} \circ \widetilde f_q |_U$, which by the identity principle for holomorphic functions yields
\begin{align*}
    T \circ \mathcal{B}^\Omega |_{A_q} = \mathcal{B}^{\widetilde\Omega} \circ \widetilde f_q.
\end{align*}
Furthermore, if $r \in \Omega \setminus \{ q \}$ and $A_q \cap A_r \neq \emptyset$, we have
\begin{align*}
    \mathcal{B}^{\widetilde\Omega} \circ \widetilde f_q |_{A_q \cap A_r} = T \circ \mathcal{B}^\Omega |_{A_q \cap A_r} = \mathcal{B}^{\widetilde\Omega} \circ \widetilde f_r |_{A_q \cap A_r}
\end{align*}
but $\mathcal{B}^{\widetilde\Omega}$ is injective, so $\widetilde f_q |_{A_q \cap A_r} = \widetilde f_r |_{A_q \cap A_r}$. Consequently, there is a well defined K{\"a}hler immersion 
\begin{align*}
    \widetilde f \colon (\Omega, kg_{\Omega}) \to (\widetilde\Omega, k\lambda g_{\widetilde\Omega}), \  \widetilde f (s) := \widetilde f_s(s)
\end{align*}
and since by construction $T \circ \mathcal{B}^\Omega |_U = \mathcal{B}^{\widetilde\Omega} \circ \widetilde f |_U$, by the identity principle for holomorphic functions
\begin{align*}
    T \circ \mathcal{B}^\Omega = \mathcal{B}^{\widetilde\Omega} \circ \widetilde f.
\end{align*}
In particular, $\widetilde f$ is injective because $T \circ \mathcal{B}^\Omega$ is injective, by Proposition \ref{PROP-injectiveKahlerimmersion}. By the open mapping theorem and the pseudoconvexity of $\Omega$, $W := \widetilde f (\Omega)$ is a pseudoconvex domain in $\widetilde \Omega$ and
$\widetilde f \colon (\Omega, g_{\Omega}) \to (W, \lambda g_{\widetilde\Omega} |_W)$
is a biholomorphic isometry. By the biholomorphic invariance of the Bergman metric, we also get
\begin{align*}
    \lambda g_{\widetilde\Omega} |_W = \big( \widetilde f ^{-1} \big)^* g_\Omega = g_W
\end{align*}
which translates, taking K{\"a}hler potentials, to
\begin{equation}\label{EQ-differencepotentials1}
    \partial \bar\partial \left( \log(K_W(z, z)) - \lambda \log(K_{\widetilde\Omega}(z, z)) \right) = 0\ ,\quad z \in W.
\end{equation}
Due to the transitivity of the action of $\text{Aut}(\widetilde\Omega)$ onto $\widetilde\Omega$, we can assume $0 \in W$. Let $\{ \phi_j \}_{j \in \mathbb{N}^*}$ be a complete orthonormal system for $A^2(W)$ such that $\phi_1(0) > 0$ and $\phi_j(0) = 0$, for all $j \geq 2$. Let $\Lambda' \subseteq W \setminus \phi_1^{-1}(0)$ be a neighborhood of $0$. Since $\phi_1 \in \mathcal{O}(\Lambda')$, \eqref{EQ-differencepotentials1} results into
\begin{equation}\label{EQ-differencepotentials2}
    \partial \bar\partial \left( \log(K_W(z, z)) - \log(|\phi_1(z)|^2) - \lambda \log(K_{\widetilde\Omega}(z, z)) \right) = 0\ ,\quad z \in \Lambda'
\end{equation}
that is, $z \mapsto \log(K_W(z, z)) - \log(|\phi_1|^2(z)) - \lambda \log(K_{\widetilde\Omega}(z, z))$ is pluriharmonic on $\Lambda'$. Consequently there exist $\Lambda \subseteq \Lambda'$ neighborhood of $0$, $h \in \mathcal{O}(\Lambda),\ h = \sum_{|\alpha| \geq 0} a_\alpha z^\alpha$, such that
\begin{equation}\label{EQ-pluriharmonicity}
\begin{split}
    \log(K_W(z, z)) &- \log(|\phi_1(z)|^2) - \lambda \log(K_{\widetilde\Omega}(z, z)) = 2\Re(h(z)) =\\
    &= h(z) + \overline{h(z)} = \sum_{|\alpha| \geq 0} \big( a_\alpha z^\alpha + \overline{a_\alpha} \overline{z^\alpha} \big)\ ,\quad z \in \Lambda.
\end{split}
\end{equation}
We now show that  $a_\alpha = 0$, for all $\alpha \in \mathbb{N}^n,\ |\alpha| \geq 1$. First, by the properties of the Bergman kernel: $K_{\widetilde\Omega}(0, 0) = \frac{1}{\text{vol}(\widetilde\Omega)}$ so we can compute around $0$
\begin{align*}
    - \log(K_{\widetilde\Omega}(z, z)) = \log\big({\text{vol}(\widetilde\Omega)}\big)+ \sum_{k = 1}^\infty \frac{\Big(1 - \text{vol}(\widetilde\Omega) K_{\widetilde\Omega}(z, z)\Big)^k}{k}.
\end{align*}
We infer by Proposition \ref{PROP-symmetricdomains} (B) that the real analytic expansion of $z \mapsto - \lambda \log(K_{\widetilde\Omega}(z, z))$ at $0$ does not contain non-constant purely holomorphic or antiholomorphic terms. Secondly, by the properties of the Bergman kernel
\begin{align*}
    \log(K_W(z, z)) - \log(|\phi_1(z)|^2) = \log \left( 1 + \sum_{j = 2}^\infty \frac{|\phi_j(z)|^2}{|\phi_1(z)|^2} \right).
\end{align*}
Since for all $j \geq 2$,  $\frac{\phi_j}{\phi_1} \in \mathcal{O}(\Lambda), \frac{\phi_j}{\phi_1}(0) = 0$, around $0$ we have $\frac{\phi_j}{\phi_1} = \sum_{| \beta | \geq 1} b_{j, \beta}\, z^\beta$, thus
\begin{align*}
    \frac{|\phi_j(z)|^2}{|\phi_1(z)|^2} = \sum_{|\beta|, |\gamma| \geq 1} b_{j, \beta} \overline{b_{j, \gamma}}\ z^\beta \overline{z^\gamma}
\end{align*}
and again, since around $0$
\begin{align*}
    \log \left( 1 + \sum_{j = 2}^\infty \frac{|\phi_j(z)|^2}{|\phi_1(z)|^2} \right) = \sum_{k = 1}^{\infty} \frac{(-1)^{k + 1}}{k}\left( \sum_{j = 2}^\infty \frac{|\phi_j(z)|^2}{|\phi_1(z)|^2} \right)^k
\end{align*}
we conclude that the real analytic expansion of $z \mapsto \log(K_W(z, z)) - \log(|\phi_1|^2(z))$ at $0$ does not contain non-constant purely holomorphic or antiholomorphic terms. Comparing with \eqref{EQ-pluriharmonicity} gives the claim. Hence
\begin{align*}
    \log(K_W(z, z)) - \log(|\phi_1(z)|^2) - \lambda \log(K_{\widetilde\Omega}(z, z)) = 2\Re(a_0)\ ,\quad z \in \Lambda
\end{align*}
or equivalently, if we set $c_0 := e^{2\Re(a_0)} \in \mathbb{R}^{> 0}$
\begin{equation}\label{EQ-nopureterms}
    K_W(z, z) = c_0 |\phi_1(z)|^2 \big( K_{\widetilde\Omega}(z, z) \big)^\lambda\ ,\quad z \in \Lambda .
\end{equation}
After analytic continuation on a neighborhood of $(0, 0)$ in $W \times \text{conj}(W)$, apply the identity principle for real analytic functions to \eqref{EQ-nopureterms} to get
\begin{equation}\label{EQ-intermediatekernel}
    K_W(z, v) = c_0 \phi_1(z) \overline{\phi_1(v)} \big( K_{\widetilde\Omega}(z, v) \big)^\lambda\ ,\quad z, v \in W.
\end{equation}
By the properties of the Bergman kernel and our choice of complete orthonormal system, we also have for $z, v \in W$
\begin{align*}
    K_W(z, 0) = \phi_1(z) \phi_1(0)\ ,\quad K_W(0, v) = \overline{\phi_1(v)} \phi_1(0)
\end{align*}
so setting $c := c_0 (\phi_1(0))^{-2} \in \mathbb{R}^{> 0}$, \eqref{EQ-intermediatekernel} becomes
\begin{equation}\label{EQ-Bergmanproduct}
    K_W(z, v) = c K_W(z, 0) K_W(0, v) \big( K_{\widetilde\Omega}(z, v) \big)^\lambda\ ,\quad z, v \in W.
\end{equation}
By the reproducing property \eqref{EQ-Reprodkernel} of $K_W$ applied to $1 \in A^2(W)$ and \eqref{EQ-Bergmanproduct}, for all $z \in W$,
\begin{align*}
    1 = c K_W(z, 0) \int_W \big( K_{\widetilde\Omega}(z, v) \big)^\lambda K_W(0, v)\ dV(v).
\end{align*}
Then $K_W(\cdot, 0) \in \mathcal{O}(W)$ is nowhere zero on $W$. By Proposition \ref{PROP-symmetricdomains} (C), the integral $\int_W \big( K_{\widetilde\Omega}(z, v) \big)^\lambda K_W(0, v)\ dV(v)$ is convergent for any $z \in \widetilde\Omega$, and hence it is holomorphic on $\widetilde\Omega$. We can thus extend $\frac{1}{K_W(\cdot, 0)}$ to $h_0 \in \mathcal{O}(\widetilde\Omega)$. Set $Z(h_0) := h_0^{-1}(0) \subset \widetilde\Omega \setminus W$. 
Now we show that $E := \partial W \cap \widetilde{\Omega} \subset \mathbb{C}^n$ is pluripolar, which means by \cite[Josefson's theorem]{Josefson} that for every point $a \in E$ there exist an open neighborhood $U \subset \mathbb{C}^n$ of $a$ and $u \in \mathrm{PSH}(U)$, not identically equal to $-\infty$, such that
\[
E \cap U \subset \{ z \in U \ | \  u(z) = -\infty \}.
\]
Set
\begin{align*}
    L_1 := \partial W \cap Z(h_0)\ ,\quad L_2 := \partial W \cap (\widetilde\Omega \setminus Z(h_0))
\end{align*}
so that $E = L_1 \cup L_2$, and since $L_1$ is pluripolar, it only suffices to prove that $L_2$ is pluripolar. Let $z_0 \in L_2$ and $B_\varepsilon(z_0) \subset \widetilde\Omega \setminus Z(h_0)$. Then $\frac{1}{h_0} \in \mathcal{O}(B_\varepsilon(z_0) \cap W)$, so there exist $\varepsilon' \in (0, \varepsilon)$ and $t \in \mathbb{R}^{> 0}$ such that
\begin{align*}
    \frac{1}{| h_0(z) |^2} \leq t\ \quad \forall\, z \in B_{\varepsilon'}(z_0) \cap W.
\end{align*}
Consequently, by \eqref{EQ-Bergmanproduct}
\begin{align*}
    \limsup_{z \in W,\ z \to z_0} K_W(z, z) &= \inf_{\delta > 0} \left\{ \sup_{z \in B_{\delta}(z_0) \cap W} \left\{ \frac{c}{| h_0(z) |^2} \big(K_{\widetilde\Omega}(z, z) \big)^\lambda \right\} \right\} \leq\\
    &\leq \sup_{z \in B_{\varepsilon'}(z_0) \cap W} \left\{ \frac{c}{| h_0(z) |^2} \big(K_{\widetilde\Omega}(z, z) \big)^\lambda \right\} \leq\\
    &\leq ct \sup_{z \in B_{\varepsilon'}(z_0)} \left\{ \big(K_{\widetilde\Omega}(z, z) \big)^\lambda \right\} < \infty.
\end{align*}
Thus, if $U_{z_0}$ is a neighborhood of $z_0$, by  \cite[Lemma~11]{PflugZwonek}: $P_{z_0} := U_{z_0} \setminus W$ is pluripolar. In particular, we now show
\begin{align*}
    P_{z_0} = \partial W \cap U_{z_0}.
\end{align*}
On one hand, since $W$ is open in $\mathbb{C}^n$: $\partial W \cap W = \emptyset$, hence 
\begin{align*}
    \partial W \cap U_{z_0} \subseteq U_{z_0} \cap (\mathbb{C}^n \setminus W) = P_{z_0}.
\end{align*}
On the other hand, if by contradiction there exist $a \in P_{z_0} \cap (\mathbb{C}^n \setminus \overline{W})$, then there exists $U_a$ neighborhood of $a$ such that
\begin{align*}
    U_a \subset U_{z_0} \cap (\mathbb{C}^n \setminus \overline{W}) \subseteq U_{z_0} \cap (\mathbb{C}^n \setminus W) = P_{z_0}
\end{align*}
which is absurd, because it is well-known (see, e.g., \cite[Theorem~4.17]{Demailly}) that a pluripolar set has zero Lebesgue measure. It is then proved that $L_2$ is pluripolar, as desired. To conclude the proof, we now show
$\widetilde\Omega \setminus E = W$. Clearly $\widetilde\Omega \setminus E \supseteq W$. Conversely, just notice that
\begin{align*}
    \widetilde\Omega \setminus E \subset W \cup (\mathbb{C}^n \setminus \overline{W})
\end{align*}
and if $(\widetilde\Omega \setminus E) \cap (\mathbb{C}^n \setminus \overline{W}) \neq \emptyset$, we would have a separation of $\widetilde\Omega \setminus E$, which contradicts the well-known fact (see, e.g., \cite[Corollary~5.26]{Demailly})  that a pluripolar and relatively closed subset of a domain does not disconnect the domain.

\subsection{Final remarks}

\begin{remark}\label{RMK-Bergmankernelcomplement}\rm
Notice that the converse of Theorem~\ref{THM-completelocsymmetric} clearly holds, since the Bergman metric of a bounded symmetric domain (and, more generally, the metric of a symmetric space) is complete. We claim that an analogous ``converse'' statement for Theorem~\ref{THM-pseudoconvexlocsymmetric} is also true.

Indeed, in the proof of Theorem~\ref{THM-pseudoconvexlocsymmetric}, by \cite[Lemma~1]{Irgens}, the basic properties of the Bergman kernel, and the fact that a pluripolar set has zero Lebesgue measure, the restriction map
\[
A^2(\widetilde\Omega) \longrightarrow A^2(\widetilde\Omega\setminus E),
\qquad 
\psi \longmapsto \psi|_{\widetilde\Omega\setminus E},
\]
is an isomorphism of Hilbert spaces. In particular,
$
K_{\widetilde\Omega\setminus E}
=
K_{\widetilde\Omega}\big|_{(\widetilde\Omega\setminus E)\times(\widetilde\Omega\setminus E)},
$
and hence the associated Bergman metrics satisfy
$g_{\widetilde\Omega\setminus E}
=
g_{\widetilde\Omega}\big|_{\widetilde\Omega\setminus E}.
$

Therefore, if $\widetilde\Omega\subset\mathbb{C}^n$ is a bounded symmetric domain and $E\subset \widetilde\Omega$ is pluripolar and closed in $\widetilde\Omega$, then $(\widetilde\Omega\setminus E, g_{\widetilde\Omega\setminus E})$ inherits the same local symmetry: namely, since $g_{\widetilde\Omega\setminus E}$ coincides with the restriction of $g_{\widetilde\Omega}$, and the curvature tensor of the Bergman metric on a bounded symmetric domain is parallel, we obtain
$\nabla R^{\widetilde\Omega\setminus E} = 0,
$
which proves the claim.
\end{remark}

\begin{remark}\label{RMK-Generalization}\rm
Notice that the authors of \cite{UnifThm2} extend the uniformization theorem mentioned above for bounded pseudoconvex domains to Stein manifolds whose Bergman metric has constant holomorphic sectional curvature, provided that their Bergman space satisfies the following conditions:
\begin{itemize}
    \item\label{item:first} it is nontrivial and base-point free;
    \item\label{item:second} it separates points;
    \item\label{item:third} it separates holomorphic directions.
\end{itemize}
These assumptions guarantee that the Bergman kernel and the Bergman metric are well-defined, and they allow one to define the Bergman--Bochner map, which in this setting is injective.
Using similar techniques, one can extend Theorem~\ref{THM-completelocsymmetric} and Theorem~\ref{THM-pseudoconvexlocsymmetric} to Stein manifolds satisfying the above conditions and the local symmetry of the Bergman metric hypothesis.
\end{remark}

\begin{remark}\label{RMK-OpenProblem}\rm
With regards to \cite{UnifThm3}, it is natural to ask whether a parallel result can be obtained by removing the assumption of pseudoconvexity of the domain. In Theorem \ref{THM-pseudoconvexlocsymmetric} we proved, using only the local symmetry condition, that the bounded locally symmetric domain is biholomorphic to an open subset of a bounded symmetric domain with the property that the Bergman metric of such open subset is, after eventually rescaling, the Bergman metric of the bounded symmetric domain. Since the analogue of this fact in the constant holomorphic sectional curvature setting is the starting point in \cite{UnifThm3}, we pose the following:
\begin{conjecture}
Let $\Omega \subset \mathbb{C}^n$, $n \geq 1$, be a bounded domain. If $\nabla R^\Omega = 0$, then there exist
$\widetilde\Omega \subset \mathbb{C}^n$ bounded symmetric domain and  $E \subset \widetilde\Omega$ of zero Lebesgue measure over which $A^2(\widetilde\Omega \setminus E)$ functions extend holomorphically to $\widetilde\Omega$, such that $\Omega \cong \widetilde\Omega \setminus E$. 
\end{conjecture}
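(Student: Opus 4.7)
My plan is to follow the proof of Theorem~\ref{THM-pseudoconvexlocsymmetric} as far as possible, keeping everything that does not truly use pseudoconvexity, and to replace the final pluripolar boundary argument by a Lebesgue--measure/$L^2$--extension argument in the spirit of \cite{UnifThm3}. The construction of a global injective K\"ahler immersion $\widetilde f\colon(\Omega,kg_\Omega)\to(\widetilde\Omega,k\lambda g_{\widetilde\Omega})$, via Helgason's local model, Calabi rigidity against the globally defined Bergman--Bochner map $\mathcal{V}_k\circ\mathcal{B}^{\mathcal{S}}$, path--extension, and Proposition~\ref{PROP-injectiveKahlerimmersion}, goes through verbatim. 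Setting $W:=\widetilde f(\Omega)\subset\widetilde\Omega$, which is open by the open mapping theorem, $\widetilde f\colon(\Omega,g_\Omega)\to(W,\lambda g_{\widetilde\Omega}|_W)$ is a biholomorphic isometry, and biholomorphic invariance of the Bergman metric gives $g_W=\lambda g_{\widetilde\Omega}|_W$. Re--running the K\"ahler--potential computation of Section~\ref{SUB-Proof2}, which uses only Proposition~\ref{PROP-symmetricdomains}(B) and this biholomorphic invariance (not pseudoconvexity of $W$), after placing the origin in $W$ by an automorphism of $\widetilde\Omega$, yields the Bergman identity
\begin{equation*}
K_W(z,v) \;=\; c\, K_W(z,0)\, K_W(0,v)\, K_{\widetilde\Omega}(z,v)^{\lambda}, \qquad z,v\in W,
\end{equation*}
together with a holomorphic extension $h_0\in\mathcal{O}(\widetilde\Omega)$ of $1/K_W(\cdot,0)$, nowhere zero on $W$.

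Next, I would extract the holomorphic extension part of the conjecture from the above. Plugging the identity into the reproducing formula \eqref{EQ-Reprodkernel} for $K_W$ applied to $\psi\in A^2(W)$ gives
\begin{equation*}
h_0(z)\,\psi(z) \;=\; c \int_W \psi(v)\, \overline{K_W(0,v)}\, \overline{K_{\widetilde\Omega}(z,v)^{\lambda}}\, dV(v), \qquad z\in W,
\end{equation*}
whose right--hand side is holomorphic in $z$ on all of $\widetilde\Omega$ (the kernel $K_{\widetilde\Omega}$ is bounded on compacta of $\widetilde\Omega\times\widetilde\Omega$, and $\overline{K_W(0,\cdot)}\,\psi\in L^1(W)$ by Cauchy--Schwarz). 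Calling it $\widetilde\psi\in\mathcal{O}(\widetilde\Omega)$, the equation $\widetilde\psi=h_0\psi$ extends $\psi$ to a meromorphic function on $\widetilde\Omega$ whose pole divisor is contained in $Z(h_0)\subset\widetilde\Omega\setminus W$. To promote the extension from meromorphic to holomorphic, I would argue --- once $E:=\widetilde\Omega\setminus W$ has been proven to have zero Lebesgue measure --- that $\psi$ extended by zero to $E$ lies in $L^2(\widetilde\Omega)$, so that an $L^2$ removal--of--singularities argument across the measure--zero set $E$, combined with the meromorphic extension just obtained, forces the possible poles to cancel.

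The hard part, explaining why the statement is only a conjecture, is proving that $E$ has zero Lebesgue measure in the absence of pseudoconvexity. In Theorem~\ref{THM-pseudoconvexlocsymmetric} this relied on the Pflug--Zwonek boundary lemma applied to $L_2=E\setminus Z(h_0)$, where the Bergman identity forces $K_W$ to be bounded; that lemma genuinely uses pseudoconvexity of $W$. My plan is to adapt the strategy of \cite{UnifThm3}: argue by contradiction, pick a point $z_0\in E$ of positive Lebesgue density, and use the Bergman identity together with the boundary asymptotics of $K_{\widetilde\Omega}$ at $z_0$ --- accessible through the Jordan--triple/root--system description of $\widetilde\Omega$ and the Wallach--set information underlying Proposition~\ref{PROP-symmetricdomains}(A) --- to construct a sequence in $A^2(W)$ concentrating at $z_0$ whose $L^2$ norms on $W$ force $K_W(z,z)$ to violate the identity $g_W=\lambda g_{\widetilde\Omega}|_W$ near $z_0$. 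Turning this heuristic into a genuine contradiction is the delicate step: for the ball treated in \cite{UnifThm3} the required boundary asymptotics and the concentration estimates follow from explicit scaling and rotational symmetry, whereas for a general bounded symmetric domain one has to handle the less tractable Jordan--algebraic structure, and this is where I expect the principal technical difficulty to lie.
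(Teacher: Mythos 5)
The statement you are asked to prove is posed in the paper only as a conjecture (in Remark~\ref{RMK-OpenProblem}); the authors give no proof, and explicitly identify the same obstacle you do. Your reconstruction of the pseudoconvexity-free portion of the argument is accurate and matches what the paper actually establishes: the global injective immersion $\widetilde f$, the identification $W=\widetilde f(\Omega)$ with $g_W=\lambda g_{\widetilde\Omega}|_W$, the kernel identity $K_W(z,v)=cK_W(z,0)K_W(0,v)K_{\widetilde\Omega}(z,v)^\lambda$, and the extension $h_0\in\mathcal{O}(\widetilde\Omega)$ of $1/K_W(\cdot,0)$ are all obtained in Section~\ref{SUB-Proof2} before pseudoconvexity enters, and the paper itself says in Remark~\ref{RMK-OpenProblem} that this is precisely the analogue of the starting point of \cite{UnifThm3}. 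So as a diagnosis of the problem your proposal is correct and well aligned with the authors' own view.

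However, it is not a proof, and you acknowledge as much. The essential step --- showing that $E=\widetilde\Omega\smallsetminus W$ has zero Lebesgue measure without assuming $W$ pseudoconvex --- is left as a heuristic (``construct a sequence in $A^2(W)$ concentrating at $z_0$ \dots''), with no actual estimate supplied; the Pflug--Zwonek lemma used in Theorem~\ref{THM-pseudoconvexlocsymmetric} genuinely requires pseudoconvexity, and in \cite{UnifThm3} the replacement argument depends on explicit boundary asymptotics of the ball's kernel that do not transfer automatically to the Jordan-triple setting of a general bounded symmetric domain. Two further points would need care even granting measure zero: (i) your meromorphic extension $\psi=\widetilde\psi/h_0$ a priori has poles along $Z(h_0)$, and the ``$L^2$ removal of singularities'' you invoke must be carried out across $Z(h_0)\cap E$, which is an analytic hypersurface, not merely a null set --- this part is standard but should be stated; (ii) the conjectured conclusion also requires $E$ to be relatively closed enough that $\widetilde\Omega\smallsetminus E$ is a domain biholomorphic to $\Omega$, i.e.\ one must rule out $\widetilde f(\Omega)$ missing an open piece of $\widetilde\Omega$, which is exactly what the connectedness/pluripolarity argument did in the pseudoconvex case. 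In short, your proposal reproduces the known partial progress but does not close the gap that makes the statement a conjecture.
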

\end{remark}

\end{document}